\def\NZQ{\mathbb}               
\def\NN{{\NZQ N}}
\def\QQ{{\NZQ Q}}
\def\ZZ{{\NZQ Z}}
\def\RR{{\NZQ R}}
\newtheorem{Theorem}{Theorem}[section]
\newtheorem{Lemma}[Theorem]{Lemma}
\newtheorem{Corollary}[Theorem]{Corollary}
\newtheorem{Remark}[Theorem]{Remark}
\let\epsilon\varepsilon
\let\phi=\varphi
\let\kappa=\varkappa
\begin{document}

\title{generating sequences of valuations on simple extensions of domains}

\author{Razieh Ahmadian}

\address{Razieh Ahmadian, Department of Mathematics, Faculty of Mathematical Sciences, Shahid Beheshti University, Tehran, Iran.}
\email{ahmadian@sbu.ac.ir}

\author{Steven Dale Cutkosky}

\thanks{The second author was partially supported by NSF grant DMS-2054394}

\address{Steven Dale Cutkosky, Department of Mathematics,
University of Missouri, Columbia, MO 65211, USA}
\email{cutkoskys@missouri.edu}


\subjclass[2000]{12J20, 16W60, 14B25}
\begin{abstract}Suppose that $(K,v_0)$ is a valued field, $f(x)\in K[x]$ is a monic and irreducible polynomial and $(L,v)$ is an extension of valued fields, where $L=K[x]/(f(x))$.  Let $A$ be a local domain with quotient field $K$ dominated by the valuation ring of $v_0$ and such that $f(x)$ is in $A[x]$. The study of these extensions is a classical subject. 

This paper is devoted to the problem of describing the structure of the associated graded ring ${\rm gr}_v A[x]/(f(x))$ of $A[x]/(f(x))$ for the filtration defined by $v$ as an extension of the associated graded ring of $A$ for the filtration defined by $v_0$. 
We give a complete simple description of this algebra when there is unique extension of $v_0$ to $L$ and the residue characteristic of $A$ does not divide the degree of $f$. To do this, we show that the sequence of key polynomials constructed by MacLane's algorithm can be taken to lie inside $A[x]$. This result was proven using a different method in the more restrictive case that the residue fields of $A$ and of the valuation ring of $v$ are equal and algebraically closed in a recent paper by Cutkosky, Mourtada and Teissier. 
\end{abstract}

\maketitle 

\section{Introduction}
 
Suppose that $(K,v_0)$ is a valued field, $f(x)\in K[x]$ is a monic and irreducible polynomial and $(L,v)$ is the finite field extension $L=K[x]/(f(x))$ where $v$ is an extension of $v_0$ to $L$. Further suppose that $A$ is a local domain with quotient field $K$ such that $v_0$  dominates $A$ and that $f(x)\in A[x]$. 

 The valuations $v_0$ and $v$  induce filtrations of $K$ and $L$ respectively and the associated graded ring of $L$ along $v$ is an extension of the associated graded ring of $K$ along $v_0$. These rings  have been constructed implicitly, in the papers \cite{M}, \cite{M1} of MacLane 
for discrete rank one valuations, and for general valuations by Vaqui\'e in \cite{V}, \cite{V2}, \cite{V1}.  Further papers on this topic, and comparison with the method of pseudo convergent sequences (introduced by Ostrowski in \cite[Teil III, \S 11]{O} and developed by Kaplansky in \cite{Ka}) are \cite{RB}, \cite{NS}, \cite{Sa}, \cite{HMOS} and \cite{DMS}. Finding generating sequences for $A[x]/(f(x))$ in the case where $A$ is no longer a field but an arbitrary noetherian subring dominated by the valuation ring $\mathcal O_{v_0}$ of $v_0$ and with the same field of fractions is much more closely related to resolution of singularities via local uniformization and correspondingly more difficult.
In this paper we study this problem. 

We will use the notation that $\mathcal O_{v_0}$ is the valuation ring of $v_0$, with maximal ideal $m_{v_0}$ and residue field $Kv_0=\mathcal O_{v_0}/m_{v_0}$ and $v_0K$ is the valuation group of $v_0$.

 Given a subring $A$ of $K$,  the associated graded ring of $A$ along $v_0$ is defined as 
$$
{\rm gr}_{v_0}(A)=\bigoplus_{\gamma\in v_0K}\mathcal P_{\gamma}(A)/\mathcal P_{\gamma}^+(A)
$$
where 
$$
\mathcal P_{\gamma}(A)=\{g\in A\setminus \{0\}\mid v_0(g)\ge\gamma\}\mbox{ and }
\mathcal P_{\gamma}^+(A)=\{g\in A\setminus \{0\}\mid v_0(g)>\gamma\}
$$
are valuation ideals of $A$.
The initial form ${\rm In}_{v_0}(g)$ of an element $g\in A$ in ${\rm gr}_{v_0}(A)$ is the class of $g$ in 
$\mathcal P_{v_0(g)}(A)/\mathcal P_{v_0(g)}^+(A)$, the graded component of degree $v_0(g)$ of ${\rm gr}_{v_0}(A)$.

The ring ${\rm gr}_{v_0}(A)$ is an algebra over its degree zero subring. It is a domain which is generally not Noetherian. In this text we shall consider subrings $A$ of $\mathcal O_{v_0}$ so that the semigroup $S^A(v_0)$ of values of elements of $A\setminus\{0\}$ which indexes the homogeneous components of ${\rm gr}_{v_0}(A)$ is contained in the positive part of $v_0K$. 

Important invariants of a finite extension $(K,v_0)\subset (L,v)$ of valued fields are the reduced ramification index and residue degree of $v$ over $v_0$, which are
$$
e(v/v_0)=[vL:v_0K]\mbox{ and }f(v/v_0)=[Lv:Kv_0].
$$
Another, very subtle invariant is the defect $\delta(v/v_0)$ of the extension, which is a power of the characteristic $p$ of the residue field $Kv_0$.  The defect and its role in local uniformization is explained in \cite{K1}.  In the case where $v$ is the unique extension of $v_0$ to $L$ we have by Ostrowski's Lemma that
\begin{equation}\label{eqN300}
[L:K]=e(v/v_0)f(v/v_0)\delta(v/v_0).
\end{equation}
If $A$ and $B$ are local  domains with quotient fields $K$ and $L$ such that $v$ dominates $B$ and $B$ dominates $A$, we have a graded inclusion of graded domains
$$
{\rm gr}_{v_0}(A)\rightarrow {\rm gr}_{v}(B).
$$
The index of quotient fields is:
$$
[{\rm QF}({\rm gr}_{v}(B)):{\rm QF}({\rm gr}_{v_0}(A))]=e(v/v_0)f(v/v_0)
$$
by Proposition 3.3  of \cite{C4}.  The defect seems to disappear, but it manifests itself in mysterious behavior in the extensions of associated graded rings of injections $A'\rightarrow B'$ of birational extensions of Noetherian local domains $A, B$. For instance, if $v_0$ has rational rank 1 but is not discrete, the defect $\delta(v/v_0)$ is larger than 1 and $A$ and $B$ are two dimensional excellent local domains, then ${\rm gr}_{v}(B')$ is not a finitely generated ${\rm gr}_{v_0}(A')$-algebra for any regular local rings $A'\rightarrow B'$ which are dominated by $v$ and dominate $A$ and $B$ as shown in \cite{C3}.
The  construction of  generating sequences is closely related to the problem of local uniformization. In \cite[Theorem 7.1]{CM}, it is shown how reduction of multiplicity along a rank 1 valuation can be achieved in a defectless  extension $A\rightarrow A[x]/(f(x))$. 
A similar statement is proven by San Saturnino in \cite{Sa}.


The subring of degree zero elements of the graded ring ${\rm gr}_{v_0}(A)$ is $({\rm gr}_{v_0}(A))_0=A/Q$ where $Q$ is the prime ideal in $A$ of elements of positive value.
A generating sequence for  $v_0$ on  $A$ is an ordered set of elements of $A$ whose classes in ${\rm gr}_{v_0}(A)$ generate ${\rm gr}_{v_0}(A)$ as a graded $({\rm gr}_{v_0}(A))_0$-algebra.  To be meaningful, a generating sequence should come with a formula for computing the values of elements of $A$, and their relations in ${\rm gr}_{v_0}(A)$. In particular, a generating sequence should give the structure of ${\rm gr}_{v_0}(A)$ as a graded $({\rm gr}_{v_0}(A))_0$-algebra. 

In the case of an inclusion $A\subset B$ of domains, and an extension $v$ of $v_0$ to the quotient field of $B$ such that $v$ has nonnegative value on $B$, a generating sequence of the extension is an ordered sequence of elements of $B$ whose classes in ${\rm gr}_{v}(B)$ generate ${\rm gr}_{v}(B)$ as a ${\rm gr}_{v_0}(A)$-algebra. A generating sequence for an extension should come with a formula for computing the values of elements of $B$, relative to the values of elements of $A$, and give their relations in ${\rm gr}_{v}(B)$. That is, a generating sequence should give the structure of ${\rm gr}_{v}(B)$ as a graded ${\rm gr}_{v_0}(A)$-algebra. 

Let $A$ be a local domain, and $v_0$ be a valuation of the quotient field $K$ of $A$ which dominates $A$. Suppose that $f(x)\in A[x]$ is monic and irreducible in $K[x]$. Suppose that the characteristic $p$ of the residue field of $A$ does not divide $\deg f$ and there is a unique extension $v$ of $v_0$ to the quotient field of $A[x]/(f(x))$. 
We prove in Theorem \ref{Theorem2} of this paper that a good generating sequence $\phi_1,\ldots,\phi_{n-1}$ exists for the extension $A\rightarrow A[x]/(f(x))$. We deduce in the corollary to Theorem \ref{Theorem2} that ${\rm gr}_{v}(A[x]/(f(x)))$ is a finitely presented ${\rm gr}_{v_0}(A)$-algebra.

We show that we have  a graded isomorphism
 $$
 {\rm gr}_{v}(A[x]/(f(x)))\cong {\rm gr}_{v_0}(A)[\overline\phi_1,\ldots,\overline \phi_{n-1}]/I
 $$
 where $\overline\phi_i$ is the initial form of $\phi_i$ in ${\rm gr}_v(A[x]/(f(x)))$ and 
 $$
 I=(\overline\phi_1^{m_1}+\sum \overline c_{1,k}\overline \phi_1^k,\ldots,
 \overline \phi_{n-1}^{m_{n-1}}+\sum \overline c_{n-1,j_1,\ldots, j_{n-2},k}\overline \phi_1^{j_1}\cdots \overline \phi_{n-2}^{j_{n-2}}\overline\phi_{n-1}^k),
 $$
  with $\overline c_{i,j_1,\ldots,j_{i-1},k}\in {\rm gr}_{v_0}(A)$.

Theorem \ref{Theorem2} is proven in Theorem 5.1 of \cite{CMT} with the additional assumptions that  $A/m_A=Kv_0$ is algebraically closed. 
The proof in \cite{CMT} is quite different from the one given here since we make use there of a very explicit  construction of  key polynomials as binomials, which is only possible when the residue field is algebraically closed. Some difficulties in extending the proof of \cite{CMT} to the case of a non algebraically closed field are discussed in Section \ref{SectionEx}.
In Theorem 5.1 \cite{CMT},  It is shown  that with the extra assumptions that $A/m_A=Kv_0$ is algebraically closed,  
  $$
  {\rm gr}_v(A[x]/(f(x)))\cong {\rm gr}_{v_0}(A)[\overline \phi_1,\ldots,\overline\phi_{n-1}]/I
  $$
  where $I$ is generated by binomials,  
  $$
  I=(\overline \phi_1^{m_1}+\overline c_1, \overline \phi_2^{m_2}+\overline c_2\phi_1^{j_1(2)},\ldots,
  \overline \phi_{n-1}^{m_{n-1}}+\overline c_{n-1}\overline\phi_1^{j_1(n-1)}\cdots \overline\phi_{n-2}^{j_{n-2}(n-1)}).
  $$

 Since the defect $\delta(v/v_0)$ is always a power of $p$, the assumption that $p$ does not divide the degree of  $f$ in Theorem \ref{Theorem2} and the assumption that $v$ is the unique extension of $v_0$  forces the defect $\delta(v/v_0)$ to be  1 (so the extension in defectless) by (\ref{eqN300}). 
 
We show in \cite{CMT} that if any of the assumptions in Theorem \ref{Theorem2} are removed, then the conclusions of Theorem \ref{Theorem2} do not hold (Examples of Section 4 and Section 11 of \cite{CMT}).  For instance,  the assumption that $A[x]/(f(x))$ is a ``hypersurface singularity'' is shown to be necessary for finite generation to hold in Example 11.3 \cite{CMT}.

 \section{MacLane's method of constructing valuations}\label{SecMacMeth}
 In this section, we give a quick survey of MacLane's theory of key polynomials. This section is similar to the survey in Sections 2 and 3 of \cite{CMT}. This section is included here to establish notation and for the reader's convenience.

 In this paper, a local ring is a commutative ring with a unique maximal ideal. In particular, we do not require a local ring to be Noetherian. We will denote the maximal ideal of a local ring $A$ by $m_A$.  The quotient field of a domain $A$ will be denoted by ${\rm QF}(A)$. We will say that a local ring $B$ dominates a local ring $A$ if $A\subset B$ and $m_B\cap A=m_A$.
 Suppose that $A$ is a Noetherian local domain with quotient field $K$ and $A\rightarrow A_1$ is an extension of local domains such that  $A_1$ is a domain whose quotient field is $K$ and $A_1$ is essentially of finite type over $A$ ($A_1$ is a localization of a finitely generated $A$-algebra). Then we will say that $A\rightarrow A_1$ is a birational extension.
 
 We will denote the natural numbers by $\NN$ and the positive integers by $\ZZ_+$.

Suppose that $v_0$ is a valuation on a field $K$. Its valuation ring $\mathcal O_{v_0}$, residue field $v_0K$ and value group $Kv_0$ are defined in the introduction.

 If $A$ is a domain which is contained in $\mathcal O_{v_0}$, then the associated graded ring of $A$ along $v_0$ is
 ${\rm gr}_{v_0}(A)$ as defined in the introduction. 
  A pseudo valuation $w$ on a domain $A$ is a surjective map $w:A\rightarrow G_w\cup\{\infty\}$ where $G_w$ is a totally ordered Abelian group and a prime ideal
 $$
 I(w)_{\infty}=I^A(w)_{\infty}=\{g\in A\mid w(g)=\infty\}
 $$
 of $A$ such that $w:{\rm QF}(A/I(w)_{\infty})\setminus \{0\}\rightarrow G_w$ is a valuation.

 Suppose that  $v$ is a valuation or a pseudo valuation on a domain $A$. Following MacLane in \cite{M}, 
 we can define an equivalence $\sim_{v}$ on $A$ defined for $g,h\in A$ by $g\sim_{v} h$  if $v(g-h)>\min\{v(g),v(h)\}$ or $v(g)=v(h)=\infty$. If this holds, we say that $g$ is equivalent to $h$ in $v$.
We say that $g\in A$ is equivalence divisible by $h$ in $v$, written $h|_{v}g$, if there exists $a\in A$ such that $g\sim_{v} ah$. An element $g$ is said to be equivalence irreducible in $v$ if $g|_{v}ab$ implies $g|_{v}a$ or $g|_{v}b$. These conditions can be expressed respectively as the statement that ${\rm In}_{v}(h)={\rm In}_{v}(g)$ in ${\rm gr}_{v}(A)$, that ${\rm In}_{v}(h)$ divides ${\rm In}_{v}(g)$ in ${\rm gr}_{v}(A)$ and that the ideal generated by ${\rm In}_{v}(g)$ in ${\rm gr}_{v}(A)$ is prime.

We review MacLane's algorithm \cite{M}  to construct the extensions of a valuation $v_0$ of a field $K$ to a valuation or pseudo-valuation of the polynomial ring $K[x]$. MacLane applied his method to construct extensions of rank 1 discrete valuations of  $K$ to $K[x]$. This algorithm  has been extended to general valuations by Vaqui\'e \cite{V}. An alternative approach to key polynomials is developed in \cite{NS}.

 MacLane constructs ``augmented sequences of inductive valuations''
\begin{equation}\label{eqM1}
v_1,\ldots,v_k,\ldots
\end{equation}
which extend $v_0$ to $K[x]$. An augmented sequence (\ref{eqM1}) is constructed from successive inductive valuations
\begin{equation}\label{eqM2}
v_k=[v_{k-1};v_k(\phi_k)=\mu_k]\mbox{ for $1\le k$}
\end{equation}
of $K[x]$, where $\phi_k$ is a ``key polynomial'' over $v_{k-1}$  and $\mu_k$ is a ``key value'' of $\phi_k$ over $v_{k-1}$. We always take $\phi_1=x$.

We say that $\phi(x)\in K[x]$ is a key polynomial with key value $\mu$ over $v_{k-1}$ if
\begin{enumerate}
\item[1)] $\phi(x)$ is equivalence irreducible in $v_{k-1}$.
\item[2)] $\phi(x)$ is minimal in $v_{k-1}$; that is, if $\phi(x)$ divides $g(x)$ in $v_{k-1}$, then $\deg_x\phi(x)\le\deg_xg(x)$.
\item[3)] $\phi(x)$ is monic and $\deg_x\phi(x)>0$.
\item[4)] $\mu>v_{k-1}(\phi(x))$.
\end{enumerate}
Following MacLane (\cite[Definition 6.1]{M}) we also assume
\begin{enumerate}
\item[5)] $\deg_x\phi_i(x)\ge\deg_x\phi_{i-1}(x)$ for $i\ge 2$.
\item[6)] $\phi_i(x)\sim \phi_{i-1}(x)$ in $v_{i-1}$ is false. Here the equivalence is to be understood for polynomials in $K[x]$.
\end{enumerate}

It follows from \cite[Theorem 9.4]{M} that 
\begin{equation}\label{eqM16}
\mbox{if $\phi(x)$ is a key polynomial over $v_{k-1}$ then $\deg_x\phi_{k-1}(x)$ divides $\deg_x\phi(x)$.}
\end{equation} 

The key polynomials $\phi_k(x)$ can further be assumed to be homogeneous in $v_{k-1}$, which will be defined after (\ref{eqM5}).

MacLane shows that if $v_0$ is discrete of rank 1, then the extensions of $v_0$ to a valuation or pseudo valuation of  $K[x]$ are the $v_k$ arising from  augmented sequences of finite length
 (\ref{eqM1}) and the limit sequences of augmented sequences of infinite length (\ref{eqM1}) which determine a limit value
 $v_{\infty} $ on $K[x]$ defined by 
 $$
 v_{\infty}(g(x))=\lim_{k\rightarrow\infty} v_k(g(x))\mbox{ for }g(x)\in K[x].
 $$
 We have that $v_{\infty}(g(x))$ is well defined whenever  $v_0$ has rank 1, and is a valuation or pseudo-valuation by the argument of  \cite[page 10]{M}.
 
 MacLane's method has been extended by Vaqui\'e \cite{V},  to eventually construct all extensions of an arbitrary valuation $v_0$ of $K$ to a valuation or pseudo valuation of $K[x]$. 
 
 To compute the ``$k$-th stage''  value $v_k(g(x))$  for $g(x)\in K[x]$ by MacLane's method, we consider the unique expansion
 \begin{equation}\label{eqM3}
 g(x)=g_m(x)\phi_k^m+g_{m-1}(x)\phi_k^{m-1}+\cdots+g_0(x)
 \end{equation}
 with $g_i(x)\in K[x]$, $\deg_xg_i(x)<\deg_x\phi_k(x)$ for all $i$ and $g_m(x)\ne 0$. Then
 $$
 v_k(g(x))=\min\{v_{k-1}(g_m(x))+m\mu_k,v_{k-1}(g_{m-1}(x))+(m-1)\mu_k,\ldots,v_{k-1}(g_0)(x)\}.
 $$
 This expression suffices to prove by induction, assuming the existence of a unique expansion of the coefficients $g_i(x)$ in terms of the polynomials $\phi_j(x)$ with $j<k$, that every $g(x)\in K[x]$ has a unique expansion
  \begin{equation}\label{eqM4}
 g(x)=\sum_ja_j\,\phi_1^{m_{1,j}}\phi_2^{m_{2,j}}\cdots \phi_k^{m_{k,j}}
\end{equation}
with $a_j\in K$ and
  $0\le m_{i,j}<\deg_x\phi_{i+1}/\deg_x\phi_i\mbox{ for }i=1,\ldots,k-1$. Recall that $\deg_x\phi_{i+1}/\deg_x\phi_i$ is a positive integer by (\ref{eqM16}).  Then
\begin{equation}\label{eqM5}
v_k(g)=\min_jv_k(a_j\phi_1^{m_{1,j}}\phi_2^{m_{2,j}}\cdots \phi_k^{m_{k,j}}).
\end{equation}
  If all terms in (\ref{eqM4}) have the same values in $v_k$ then $g$ is said to be homogeneous in $v_k$.

  \begin{Remark}\label{RemarkM20}If 
   $A$ is a subring of $K$ such that $\phi_i\in A[x]$ for $1\le i\le k$ and $g\in A[x]$, then the coefficients $a_j$ in (\ref{eqM4}) are all in $A$.
   \end{Remark}
  
  The polynomial $g$, with expansion (\ref{eqM3}), is minimal in $v_k$ if and only if $g_m\in K$ and \begin{equation}\label{eqM13}
  v_k(g)=v_k(g_m\phi_k^m)
  \end{equation}
  by 2.3 \cite{M1} or Theorem 9.3 \cite{M}.

  By 3.13 of \cite{M1} or \cite[Theorem 6.5]{M}, for $k>i$,
  \begin{equation}\label{eqM8}
  v_k(\phi_i)=v_i(\phi_i)\mbox{ and }v_k(g)=v_{i}(g)\mbox{ whenever }\deg_xg<\deg_x\phi_{i+1}.
  \end{equation}
  
 Further, by \cite[Theorems 5.1 and 6.4]{M}, or \cite[3.11 and 3.12]{M1},
 \begin{equation}\label{eqM15}
 \mbox{For all $g\in K[x]$, $v_k(g)\ge v_{k-1}(g)$ with equality if and only if $\phi_k\not\,\mid_{v_k} g$.}
 \end{equation}
  
  Now suppose that $f(x)\in K[x]$ is monic and irreducible. The extensions of $v_0$ to valuations of $K[x]/(f(x))$ are the extensions of $v_0$ to 
pseudo valuations $v$ of $K[x]$ such that $I(v)_{\infty}=(f(x))$. MacLane \cite{M1} gives an explicit explanation of how his algorithm can be applied to construct the pseudo valuations $v$ of $K[x]$ which satisfy $I(v)_{\infty}=(f(x))$ in Section 5 of \cite{M1} (when $v_0$ is discrete of rank 1). Vaqui\'e shows in \cite{V}, \cite{V2} and  \cite{V1} how this algorithm can be extended to arbitrary valuations $v_0$ of $K$.

Suppose that $v_1,\ldots, v_k$ is an augmented  sequence of inductive valuations in $K[x]$. Expand 
$$
f=f_m\phi_k^m+\cdots+f_0
$$
as in (\ref{eqM3}). Define the projection of $v_k$ by 
\begin{equation}\label{eqN1}
{\rm proj}(v_k)=\alpha-\beta
\end{equation}
 where $\alpha$ is the largest and $\beta$ is the smallest amongst the exponents $j$ for which $v_k(f(x))=v_k(f_j\phi_k^j)$. 
A $k$-th approximant $v_k$ to $f(x)$ over $v_0$ is a $k$-th stage homogeneous (meaning that the key polynomial $\phi_i$ is homogeneous in $v_{i-1}$ for $i\le k$)
inductive valuation which is an extension of $v_0$ and which has a positive projection (\cite[Definition 3.3]{M1}).

First approximants $v_1$ to $f$ are defined as $v_1=[v_0;v_1(\phi_1)=\mu_1]$, where $\phi_1=x$ and $\mu_1$ is chosen so that ${\rm proj}(v_1)>0$. MacLane shows in \cite[Lemma 3.4]{M1} that if $v_k$ is a $k$-th approximant to $f(x)$, then so is $v_i$ for $i=1,\ldots,k-1$. Further, $\phi_k|_{v_{k-1}}f$  and $v_k(f(x))>v_{k-1}(f(x))>\cdots>v_1(f(x))$. In \cite[Theorem 10.1]{M1}, MacLane shows that if $v_0$ is a discrete valuation of rank 1 then
every extension of $v_0$ to a valuation of $K[x]/(f(x))$ is an augmented sequence of finite length of approximants
$v_1,\ldots,v_k$ such that $v_k(f(x))=\infty$  or a limit of an augmented sequence of approximants of infinite length such that $v_{\infty}(f(x))=\infty$. If $v_0$ is not discrete of rank 1, then there is the possibility that the algorithm will have to be continued to construct a pseudo valuation $w$ of $K[x]$ with $w(f(x))=\infty$. If this last case occurs, then the situation becomes quite complicated, as we must then extend the family $\{v_k\mid k\in \ZZ_+\}$ to a ``simple admissible family'' and possibly make some jumps. 
This is shown by Vaqui\'e in \cite[Theorem 2.5]{V1}. An essential point is that for every construction
$v_1,\ldots,v_k$ of approximants to $f$ over $v_0$ by MacLane's algorithm, there exists an extension $w$ of $v_0$ to a pseudo valuation of $K[x]$ such that $I(w)_{\infty}=(f(x))$ and $w(\phi_k)=v_k(\phi_k)$ for all $k$. Another development of approximants for arbitrary valuations is given in \cite{NS}.

MacLane gives the following explanation of how to find all of the extensions of a $(k-1)$-st stage  approximant $v_{k-1}$ to $f$ over $v_0$ to a $k$-th stage approximant $v_k$ to $f$ over $v_0$. 

We say that $e\in K[x]$ is an ``equivalence unit'' for $v_k$ if there exists an ``equivalence-reciprocal''  $h\in K[x]$ such that $eh\sim_{v_k} 1$. It is shown in Section 4 of \cite{M1} that $e$ is an equivalence unit if and only if $e$ is equivalent in   $v_k$ to a polynomial $g$ such that $\deg_xg<\deg_x\phi_k$.

 By \cite[Theorem 4.2 ]{M1}, $f$ has an essentially unique (unique up to equivalence in $v_{k-1}$) expression

\begin{equation}\label{eqM6}
f\sim_{v_{k-1}} e\phi_{k-1}^{m_0}\psi_1^{m_1}\cdots\psi_t^{m_t},
\end{equation}
 with $m_0\in \NN$ and $m_1,\ldots,m_t>0$.  Here $e$ is an equivalence unit for $v_{k-1}$  and $\psi_1,\ldots,\psi_t$ are homogeneous key polynomials  over $v_{k-1}$ all not equivalent to  $\phi_{k-1}$ in $v_{k-1}$ and not equivalent in $v_{k-1}$ to  each other. We have that $t>0$ since ${\rm proj}(v_{k-1})>0$. We have that $\phi_{k-1}$ is a homogeneous  key polynomial in $v_{k-1}$ by \cite[Lemma 4.3]{M1}.

If $f$ is a homogeneous key polynomial for $v_{k-1}$, then $v_k=[v_{k-1};v_k(f(x))=\infty]$ is a 
pseudo valuation of $K[x]$ with $I(v_k)_{\infty}=(f(x))$.

If $f$ is not a homogeneous key polynomial for $v_{k-1}$, then none of the $\psi_i$ are equal to $f$, and we may define a $k$-th stage approximant to $f$ over $v_0$ which is an inductive valuation of $v_{k-1}$ by $v_k=[v_{k-1};v_k(\phi_k)=\mu_k]$ where $\phi_k$ is one of the $\psi_i$. In the expansion (\ref{eqM3}) of $f$,
$$
f=f_m\phi_k^m+\cdots+f_0
$$
$\mu_k$ must be chosen so that ${\rm proj}(v_k)>0$. All $k$-th stage approximants $v_k$ to $f$ extending $v_{k-1}$ are found by the above procedure.

Let $T=\RR\times v_0K$. Given $\alpha,\beta\in v_0K$ and $q\in \RR$, we have the line
$$D=\{(s,\gamma)\in T\mid q\gamma+\alpha s+\beta=0\}
$$
in $T$. When $q\ne 0$, we define the slope of $D$ to be $-\frac{\alpha}{q}\in v_0K\otimes_{\ZZ}\RR$.
Associated to $D$ are two half spaces of $T$,
$$
H^D_{\ge}=\{(s,\gamma)\in T\mid q\gamma+\alpha s+\beta\ge 0\}
$$
and
$$
H^D_{\le}=\{(s,\gamma)\in T\mid q\gamma+\alpha s+\beta\le0\}.
$$
 Given a subset $A$ of $T$, the convex closure of $A$ is ${\rm Conv}(A)=\cap H$ where $H$ runs over the half spaces of $T$ which contain $A$.

The Newton polygon is constructed as on page 500 of \cite{M1} and page 2510 of  \cite{V1}. These constructions are equivalent but slightly different. We use the convention of \cite{M1}. 
 The possible values $\mu_k$ can be conveniently found from the Newton polygon $N(v_{k-1},\phi_k)$. This is constructed by taking the convex closure in  $T$ of
 $$
 A=\{(m-i,\delta)\mid \delta\ge v_{k-1}(f_i), 0\le i\le m\},
 $$
 where the union is over $i$ such that $f_i\ne 0$. A segment $F$ of ${\rm Conv}(A)$ is a subset $F$ of ${\rm Conv}(A)$ which is defined by $F={\rm Conv}(A)\cap D$ where $D$ is a line of $T$ such that ${\rm Conv}(A)$ is contained in one of the half spaces $H^D_{\ge}$ or $H^D_{\le}$ defined by $D$ and $F={\rm Conv}(A)\cap D$ contains at least two distinct points.

 The  slopes of the segments of $N(v_{k-1},\phi_k)$ whose slope $\mu$ satisfies $\mu>v_{k-1}(\phi_k)$ are the possible values of $\phi_k$. The polygon composed of those segments of slope $\mu$ with $\mu>v_{k-1}(\phi_k)$ is called the principal part of the Newton polygon $N(v_{k-1},\phi_k)$.

In the proof of Theorem 5.1 of \cite{M1}, it is shown that for $1\le i\le t$, the principal polygon of $N(v_{k-1},\psi_i)$  (from(\ref{eqM6})) is 
\begin{equation}\label{eqM9}
\{(s,u)\in N(v_{k-1},\psi_i)\mid s\ge m-m_i\}.
\end{equation}
Further, $m_0$ is the smallest exponent $i$ such that in the expansion $f=\sum f_i\phi_{k-1}^i$ with $\deg_x f_i<\deg_z\phi_{k-1}$, we have that $v_{k-1}(f_i\phi_{k-1}^i)=v_{k-1}(f(x))$.

\begin{Remark}\label{RemarkM17}
 If the coefficients of $f(x)$ are all in the valuation ring $\mathcal O_{v_0}$ of $v_0$, then the coefficients of all key polynomials $\phi_k$ are also in $\mathcal O_{v_0}$, as is established in \cite[Theorem 7.1]{M1}.
\end{Remark}

The following theorem follows from a criterion of \cite{V1}.

\begin{Theorem}\label{Theorem4} Suppose that $v_k$ is a $k$-th approximant to $f$ over $v_0$. Then there exists a pseudo valuation $w$ of $K[x]$ such that $w| K=v_0$, $I(w)_{\infty}=(f(x))$, $w(g)\ge v_k(g)$ for all $g\in K[x]$ and $w(\phi_i)=v_i(\phi_i)$ for $1\le i\le k$.
\end{Theorem}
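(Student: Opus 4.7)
The plan is to apply Vaqui\'e's extension criterion \cite[Theorem 2.5]{V1} (an extension of MacLane's Theorem 10.1 in \cite{M1}) to the finite augmented sequence $v_1,\ldots,v_k$ that underlies $v_k$. Starting from $v_k$, one of two things happens. If $f$ is already a homogeneous key polynomial for $v_k$ (equivalently, $v_k(f)$ is computed by the single term $f_m\phi_k^m$ in the expansion (\ref{eqM3}) with $m=1$ and $f_m=1$), then one simply sets $w=[v_k;w(f)=\infty]$ and is done. Otherwise, the factorization (\ref{eqM6}) applied at level $k$ produces a homogeneous key polynomial $\psi_i$ over $v_k$ that equivalence-divides $f$, and since ${\rm proj}(v_k)>0$ we may pick such a $\psi_i$ as $\phi_{k+1}$ and a key value $\mu_{k+1}$ so that the new stage $v_{k+1}=[v_k;v_{k+1}(\phi_{k+1})=\mu_{k+1}]$ is again an approximant to $f$. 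Iterating, we produce either a finite augmented sequence terminating in some $v_n$ with $v_n(f)=\infty$ (whence $w=v_n$), or an infinite augmented sequence of approximants to $f$. In the infinite case, either $v_n(f)\to\infty$ and we take the pointwise limit as the desired $w$, or the key values $\mu_n$ accumulate so that MacLane's countable process must be extended to a simple admissible family with a limit key polynomial and possibly jumps; this is precisely the contents of \cite[Theorem 2.5]{V1}, which guarantees that in every case there is a pseudo-valuation $w$ of $K[x]$ extending $v_0$ with $I(w)_\infty=(f(x))$.

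It remains to verify the two compatibility statements $w|K=v_0$ (automatic), $w(\phi_i)=v_i(\phi_i)$ for $1\le i\le k$, and $w(g)\ge v_k(g)$ for all $g\in K[x]$. The first of these follows from (\ref{eqM8}): at every subsequent stage $j\ge k$ one has $\deg_x\phi_i<\deg_x\phi_{j+1}$, so the value of $\phi_i$ is frozen from stage $i$ onward and this stabilization is preserved under limits and jumps in Vaqui\'e's construction. The inequality $w(g)\ge v_k(g)$ follows by iterated application of (\ref{eqM15}), which gives $v_{j+1}(g)\ge v_j(g)$ for all $j\ge k$; monotonicity is preserved under the limit or jump that produces $w$.

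The main technical obstacle is the non-discrete, higher-rank case, where the naive sequence $v_k,v_{k+1},\ldots$ may fail to have a well-defined pointwise limit and Vaqui\'e's simple admissible families with limit key polynomials and jumps are required. The bookkeeping needed to show that the jumps do not disturb the values $v_i(\phi_i)$ already fixed at earlier stages is exactly what is handled in \cite{V1}, and once that machinery is invoked, all three desired properties of $w$ are built-in invariants of the construction.
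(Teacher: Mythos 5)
Your argument is correct and follows essentially the same route as the paper: both observe that the factorization (\ref{eqM6}) over $v_k$ supplies a key polynomial dividing $f$ and then delegate the existence of the continuation (through limits and jumps) to Vaqui\'e, the paper citing \cite[Theorem 1]{V1} where you cite \cite[Theorem 2.5]{V1} and sketch the iteration yourself. The only slip is your parenthetical: being a homogeneous key polynomial for $v_k$ is not equivalent to having $m=1$ and $f_m=1$ compute $v_k(f)$ in (\ref{eqM3}) --- by (\ref{eqM13}) minimality only requires $f_m\in K$ and $v_k(f)=v_k(f_m\phi_k^m)$ with $m$ arbitrary, and equivalence irreducibility is also needed --- but this aside is not load-bearing.
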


\begin{proof} As explained in the construction of $v_k$ above, we have that $\phi_k|_{v_{k-1}}f$, and there exists a key polynomial $\psi$ for $v_k$ with $\psi$ not equivalent to $\phi_k$ in $v_k$ and such that $\psi|_{v_k}f$. The theorem now follows from \cite[Theorem 1]{V1}.
\end{proof}

\section{An example illustrating some difficulties with residue fields which are not algebraically closed}\label{SectionEx}

Most algorithms for constructing generating sequences for an extension of a valuation $v_0$ dominating a ring $A$ to a valuation 
$v$ dominating a simple extension $B$ of $A$  involve an inductive construction, building up a sequence of monic key polynomials  $P_i$ in the polynomial ring $A[z]$, where the inductive step requires the existence of a polynomial $U\in A[z]$ which is a monomial in the the $P_j$ with $j<i$ such that $v(P_i^n)=v(U)$ where $n$ is the index of the group of values of Laurent monomials in the $P_j$ for $j<i$ in the group of values of Laurent monomials in the $P_j$ for $j\le i$.

This technique is used for instance in the proofs of   \cite{S},  \cite{CV} and 
 of Theorem 5.1 \cite{CMT}.

We give here an example showing that this technique fails if the assumption that $A/m_A\cong k$ is an algebraically closed field is removed from the assumptions  of Theorem 5.1 \cite{CMT}. Thus a different method will have to be utilized to construct generating sequences with these assumptions. 

This technique is probably applicable to extend the algorithm of Theorem 4.1 of \cite{CMT} to the case where the residue field is not algebraically closed. In Theorem 4.1 of \cite{CMT}, $A$ is restricted to be the valuation ring of $v_0$.

Let $K=\QQ(s,t)$ be a rational function field in two variables over $\QQ$. Define a rank 1 valuation $v_0$ on $K$ which dominates $A=\QQ[s,t]_{(s,t)}$ such that the residue field of the valuation ring is $Kv_0=\QQ$, $v_0(s)=1, v_0(t)=\frac{3}{2}$, and the residue class $\left[\frac{t^2}{s^3}\right]=1$ in $Kv_0 =\QQ$. Such a valuation $\nu_0$ exists, as can be seen from the algorithm of  \cite{S}, \cite{CV} (or \cite{M1}, \cite{V} or \cite{NS}).

Let $f(x)=x^2+s$, and let $v$ be a valuation of $K[x]/(f(x))$ which extends $v_0$. We can identify $v$ with a pseudo valuation of $K[x]$ which extends $v_0$ and such that $v(f)=\infty$. Let $v_1$ be the first approximant to $v$. We have that $v(x)=\frac{1}{2}$, so $v_1$ is the Gauss valuation of $K[x]$ such that 
$$
v_1(\sum a_ix^i)=\min\left\{\nu(a_i)+i\frac{1}{2}\right\}.
$$
We will show that $f$ is a key polynomial for $v_1$. If $f$ is not a key polynomial for $v_1$, then as explained in Section \ref{SecMacMeth}, 
$$
f(x)\sim_{v_1}(x+\phi_1)(x+\phi_2)
$$
with $\phi_1,\phi_2$ in the valuation ring $\mathcal O_{v_0}$ of $v_0$. We then have 
$1=v_0(s)=v_0(\phi_1)+v_0(\phi_2)$ and $v_0(\phi_1+\phi_2)>\frac{1}{2}$ so that $v_0(\phi_1)=v_0(\phi_2)=\frac{1}{2}$ and $\phi_2\sim_{v_0}-\phi_1$. Thus $\phi_1\sim_{v_0}c\frac{t}{s}$ for some $0\ne c\in \QQ$. We thus have $x^2+s\sim_{v_1}x^2-c^2\left(\frac{t}{s}\right)^2$ so $s\sim_{v_0}-c^2\left(\frac{t}{s}\right)^2$. This implies
$1=-c^2\left[\frac{t^2}{s^3}\right]=-c^2$ in $K\nu=\QQ$ which is impossible since $c\in \QQ$. This contradiction shows that $f(x)$ is a key polynomial for $v_1$. 

The above calculation shows that there is a unique extension of $v_0$ to a valuation $v$ of $K[x]/(f(x))$. 

The index $n=[\ZZ v(x)+v_0 K:v_0 K]=1$ but there does not exist $U\in A$ such that $v(x)=v_0(U)$ since $v(x)=\frac{1}{2}$ and the smallest positive value of an element of $A$ is 1.

Even though the technique fails, in this example there is  a nice generating sequence in $A[x]/(f(x))$, namely a generating sequence in $A$, starting with $s,t$ as can be constructed from the algorithm of  \cite{CV} and with the addition of $x$.

\section{The construction of a generating sequence}

We use the notation of MacLane from \cite{M} and \cite{M1}, summarized in the introduction and Section \ref{SecMacMeth}.
 
\begin{Lemma}\label{Lemma1}
Let  $v_0$ be a valuation of a field $K$ and  $f(x)\in \mathcal O_{v_0}[x]$ be monic and irreducible in the polynomial ring $K[x]$. Suppose that the characteristic $p$ of the residue field of $\mathcal O_{v_0}$ does not divide $\deg f$ and there is a unique extension $v$ of $v_0$ to  $L=K[x]/(f(x))$. 
Then there exists a finite sequence of inductive values $v_1,\ldots,v_n$ with respective associated key polynomials $\phi_1=x, \phi_2,\ldots,\phi_n=f$ in $\mathcal O_{v_0}[x]$ such  that  $v_k=[v_{k-1}, v_k(\phi_k)=v(\phi_k)]$ for $1\le k\le n$, $\phi_n=f$,  $v_n(f)=\infty$  and 
 $v$ is the valuation on $K[x]/(f(x))$ induced by the pseudo valuation $v_n$. Further, 
 for all $k<n$, $\deg \phi_k\mid \deg f$  and 
$f\sim_{v_{k-1}}\phi_k^{n_k}$ where $n_k=\frac{\deg f}{\deg \phi_k}$.
\end{Lemma}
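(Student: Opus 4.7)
My plan is to run MacLane's algorithm from Section~\ref{SecMacMeth}, starting with $\phi_1=x$, and to show inductively that at each step one may choose the new key polynomial in $\mathcal O_{v_0}[x]$ with key value $v(\phi_k)$, that the decomposition of $f$ relative to $v_{k-1}$ collapses to a pure power of $\phi_k$, and that the algorithm terminates in finitely many steps because the extension is defectless.

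\textbf{Step 1 (constructing the $v_k$ inside $\mathcal O_{v_0}[x]$).} Assume inductively that $v_1,\ldots,v_{k-1}$ have been built with $\phi_i\in\mathcal O_{v_0}[x]$ and $v_i(\phi_i)=v(\phi_i)$. By Theorem~\ref{Theorem4}, the approximant $v_{k-1}$ extends to a pseudo-valuation $w$ of $K[x]$ with $I(w)_\infty=(f)$ and $w(\phi_i)=v_i(\phi_i)$. Uniqueness of $v$ forces $w$ to induce $v$ on $L$, so $v(\phi_i)=v_i(\phi_i)$. By Remark~\ref{RemarkM17} one may then choose the next key polynomial $\phi_k$ in $\mathcal O_{v_0}[x]$, and set $v_k=[v_{k-1};v_k(\phi_k)=v(\phi_k)]$.

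\textbf{Step 2 (pure-power form).} By (\ref{eqM6}) there is an essentially unique decomposition
$$f\sim_{v_{k-1}} e\,\phi_{k-1}^{m_0}\,\psi_1^{m_1}\cdots\psi_t^{m_t},\qquad t\ge 1,$$
in which the $\psi_i$ are homogeneous key polynomials over $v_{k-1}$, pairwise inequivalent and inequivalent to $\phi_{k-1}$. If $t\ge 2$, then $\psi_1$ and $\psi_2$ each determine a $k$-th approximant, and by Theorem~\ref{Theorem4} each extends to a pseudo-valuation of $K[x]$ with $\infty$-ideal $(f)$; these give two distinct extensions of $v_0$ to $L$, contradicting uniqueness. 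Hence $t=1$, and I take $\phi_k=\psi_1$. It remains to argue $m_0=0$ and that the equivalence unit $e$ is $\sim_{v_{k-1}}$-trivial, so that $f\sim_{v_{k-1}}\phi_k^{m_1}$. This is where the hypothesis $p\nmid\deg f$ is used: by (\ref{eqN300}), uniqueness together with $p\nmid \deg f$ forces $\delta(v/v_0)=1$, so the extension is defectless. Comparing degrees, using $\deg\phi_{k-1}\mid\deg\phi_k$ from (\ref{eqM16}) and the fact that $e$ is equivalent to a polynomial of degree less than $\deg\phi_{k-1}$, then forces $m_1\deg\phi_k=\deg f$, whence $n_k=m_1=\deg f/\deg\phi_k$ and $\deg\phi_k\mid\deg f$.

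\textbf{Step 3 (termination and main obstacle).} In the defectless setting, MacLane's algorithm reaches some finite $n$ at which $\phi_n\sim_{v_{n-1}}f$; since both are monic of degree $\deg f$, I may replace $\phi_n$ by $f$ itself and set $v_n=[v_{n-1};v_n(f)=\infty]$. The resulting pseudo-valuation $v_n$ has $I(v_n)_\infty=(f)$ and induces $v$ on $L$, finishing the construction. The hardest point I anticipate is in Step 2: ruling out $m_0>0$ and the contribution of the equivalence unit $e$ in order to reach the clean pure-power form $f\sim_{v_{k-1}}\phi_k^{n_k}$. The uniqueness argument via Theorem~\ref{Theorem4} cleanly eliminates multiple $\psi_i$, but $\phi_{k-1}$ itself is barred from being the next key polynomial by MacLane's condition (6), so eliminating $m_0$ seems to require combining the defectless structure with a direct analysis of the initial forms in ${\rm gr}_{v_{k-1}}$ together with the degree divisibility from (\ref{eqM16}).
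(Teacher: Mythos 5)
Your overall skeleton (run MacLane's algorithm, use uniqueness of the extension to collapse the factorization (\ref{eqM6}), use defectlessness for termination) matches the paper's strategy, but there is a genuine gap, which you yourself flag: you never actually establish the ``Further'' clause $f\sim_{v_{k-1}}\phi_k^{n_k}$ with $n_k\deg\phi_k=\deg f$, and this is the substantive content of the lemma (it is exactly what the proof of Theorem \ref{Theorem2} consumes). Your Step 2 eliminates $t\ge 2$ by a plausible uniqueness argument, but leaves three things unproven: (i) that $m_0=0$, i.e.\ that $\phi_{k-1}$ does not occur in the equivalence factorization of $f$; (ii) that the equivalence unit $e$ satisfies $e\sim_{v_{k-1}}1$; and (iii) that $m_1\deg\phi_k$ equals $\deg f$ rather than merely dividing into it. Your suggestion that (iii) follows by ``comparing degrees'' does not work as stated, because $\sim_{v_{k-1}}$-equivalence does not preserve degree, so no direct degree count of the two sides of (\ref{eqM6}) is available.

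The paper closes these holes as follows. For (i) and the collapse to a single key polynomial it invokes Vaqui\'e's Theorem 3.1 of \cite{V1}, which under the uniqueness hypothesis yields $f\sim_{v_{k-1}}e_k\phi_k^{n_k}$ outright. For (iii) it invokes MacLane's Theorem 5.2 of \cite{M1}: when the extension is unique one has ${\rm proj}(v_k)\cdot\deg\phi_k=\deg f$ and $n_k={\rm proj}(v_k)$, giving the exact equality $\deg f=n_k\deg\phi_k$. For (ii) it uses the effective degree $D_{\phi_{k-1}}$ (which is additive and invariant under $v_{k-1}$-equivalence): one computes $D_{\phi_{k-1}}(f)=n_k\deg\phi_k/\deg\phi_{k-1}$, deduces that the top term of the $\phi_{k-1}$-expansion of $f$ achieving the minimal value is the monic leading term $\phi_{k-1}^{s}$ with $s=D_{\phi_{k-1}}(f)$, and compares it with the top term $e_k\phi_{k-1}^{s}$ of the expansion of $e_k\phi_k^{n_k}$ to conclude $e_k\sim_{v_{k-1}}1$. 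None of this machinery (nor a substitute for it) appears in your argument, so the pure-power form remains unestablished. Separately, your termination claim in Step 3 (``the algorithm reaches some finite $n$'') is a nontrivial fact in the defectless case; the paper obtains it from Proposition 2.12 of \cite{V2}, and you should either cite such a result or prove it.
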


\begin{proof} Since $v_0$ has a unique extension to $L$, by Ostrowski's Lemma (\ref{eqN300}), 
$$
[L:K]=e(v/v_0)f(v/v_0)\delta(v/v_0)
$$
where the defect $\delta(v/v_0)$ is a power of $p$. Since $p\nmid \deg f=[L:K]$, we have that $\delta(v/v_0)=1$ and so $[L:K]=e(v/v_0)f(v/v_0)$. By 
\cite[Proposition 2.12]{V2}, there exists a finite sequence of key polynomials $\phi_1=x, \phi_2,\ldots,\phi_n=f$ of the associated pseudo valuation  to $v$ on $K[x]$  with successive approximants $v_k=[v_{k-1}, v_k(\phi_k)=v(\phi_k)]$ such that
$v_n$ is the pseudo valuation of $K[x]$ associated to $v$.
 That is, MacLane's algorithm  terminates in a finite number of steps with the construction of a generating sequence for $v$. We have that $\phi_k(x)\in \mathcal O_{v_0}[x]$ for all $k$ by Remark \ref{RemarkM17} since $f\in \mathcal O_{v_0}[x]$.

We have that, for all $k$,
\begin{equation}\label{eq7}
\deg \phi_k|\deg f
\end{equation}
by (\ref{eqM16}). By (\ref{eqM6}), for each $k$,
$$
f(x)\sim_{v_{k-1}}e(x)\phi_{k-1}(x)^{m_0}\psi_1(x)^{m_1}\cdots\psi_t(x)^{m_t}
$$
where $e(x)$ is an equivalence unit for $v_{k-1}$ with $\deg e(x)<\deg \phi_{k-1}(x)$ and $\psi_1,\ldots,\psi_t$ are key polynomials for $v_{k-1}$. By \cite[Theorem 3.1]{V1}, with our assumptions,
$$
f(x)\sim_{v_{k-1}}e_k(x)\phi_k^{n_k}
$$
for some equivalence unit $e_k(x)$ for $v_{k-1}$ and $n_k$ a positive integer. The positive integer $\mbox{proj }v_k$ of (\ref{eqN1}) is computed from the principal part of the  Newton Polygon associated to $\phi_k$ as explained in Section \ref{SecMacMeth}.
By \cite[Theorem 5.2]{M1}, for all $k<n$, 
since the extension of $v_0$ to $L$ is unique,
$$
\mbox{proj }v_k\deg \phi_k=\deg f
$$
and $n_k=\mbox{proj }v_k$ by the paragraph before equation (7) of Section 5 of \cite{M1}, so 
\begin{equation}\label{eq6}
\deg f=n_k\deg \phi_k.
\end{equation}

The effective degree is defined on page 497 of \cite{M1}. Let $g(x)\in K[x]$ be a polynomial and expand 
$$
g(x)=g_s\phi_{k-1}^s+g_{s-1}\phi_{k-1}^{s-1}+\cdots+g_0
$$
with $\deg g_i<\deg \phi_{k-1}$ for all $i$. The effective degree $D_{\phi_{k-1}}(g)$ is the largest exponent $i$ for which 
$v_{k-1}(g)=v_{k-1}(g_i\phi_{k-1}^i)$. Basic properties of the effective degree are derived on page 497 \cite{M1}. It is shown there that $v_{k-1}$ equivalent polynomials have the same effective degrees and $D_{\phi_{k-1}}(gh) =D_{\phi_{k-1}}(g)+D_{\phi_{k-1}}(h)$.

Since $f(x)\sim_{v_{k-1}}e_k\phi_k^{n_k}$ we have that $D_{\phi_{k-1}}(f)=D_{\phi_{k-1}}(e_k\phi_k^{n_k})$.
Since $\phi_k$ is a key polynomial for $v_{k-1}$, we have an expression 
$$
\phi_k=\phi_{k-1}^l+a_{l-1}\phi_{k-1}^{l-1}+\cdots+a_0
$$
for some $l$ with $\deg a_i<\deg \phi_{k-1}$ for all $i$ with
$$
v_{k-1}(\phi_k)=v_{k-1}(\phi_{k-1}^l)=v_{k-1}(a_0)
$$
by \cite[Theorem 9.4]{M}. Expand 
$$
f(x)=f_s\phi_{k-1}^s+f_{s-1}\phi_{k-1}^{s-1}+\cdots+f_0
$$
with $\deg f_i<\deg \phi_{k-1}$ for all $i$. We have that $f_s=1$ since $f$ is monic and by (\ref{eq7}), we have that
\begin{equation}\label{eqN8}
D_{\phi_{k-1}}(f)=D_{\phi_{k-1}}(e_k\phi_k^{n_k})=D_{\phi_{k-1}}(\phi_k^{n_k})=ln_k=n_k\frac{\deg\phi_k}{\deg \phi_{k-1}}.
\end{equation}
Thus 
$$
\deg f\ge D_{\phi_{k-1}}(f)\deg \phi_{k-1}=n_k\deg\phi_k=\deg f
$$
by (\ref{eq6}). Thus $s=ln_k$.

Suppose that $a,b\in K[x]$ and $\deg a,\deg b<\deg \phi_{k-1}$. Expand $ab=q\phi_{k-1}+r$ with $\deg r<\deg \phi_{k-1}$. As explained on the bottom of page 369 of \cite{M}, if $q\ne 0$, then 
$$
v_{k-1}(ab)=v_{k-2}(ab)=v_{k-2}(r)\le v_{k-2}(q\phi_{k-1})<v_{k-1}(q\phi_{k-1}).
$$
 Thus we have an
expansion
$$
e_k\phi_k^{n_k}\sim_{v_{k-1}} e_k\phi_{k-1}^s+a_{s-1}'\phi_{k-1}^{s-1}+\cdots+a_0'
$$
where $\deg a_i'<\deg \phi_{k-1}$ for all $i$. We have $s=D_{\phi_{k-1}}(f)$ and $f\sim_{v_{k-1}}e_k\phi_k^{n_k}$. Thus
$$
v_{k-1}(f-e_k\phi_k^{n_k})>v_{k-1}(f)=v_{k-1}(\phi_{k-1}^s),
$$
and so
$v_{k-1}(\phi_{k-1}^s-e_k\phi_{k-1}^s)>v_{k-1}(\phi_{k-1}^s)$ and so $e_k\sim_{v_{k-1}} 1$.
In conclusion, we have shown that 
$f(x)\sim_{v_{k-1}}\phi_k^{n_k}$ where $n_k\deg\phi_k=\deg f$ for all $k$.

\end{proof}

We now prove the main result of this paper. This theorem generalizes Theorem 5.1 of \cite{CMT}, where Theorem \ref{Theorem2}  is proven with the additional assumptions on residue fields that $A/m_A=Kv_0$ is algebraically closed. The proof in \cite{CMT} is quite different from the one given here since in \cite{CMT}, we make use  of a very explicit  construction of  key polynomials as binomials, which is only possible when the residue field is algebraically closed. 

\begin{Theorem}\label{Theorem2}
Let $A$ be a local domain, and $v_0$ be a valuation of the quotient field $K$ of $A$ which dominates $A$. Suppose that $f(x)\in A[x]$ is monic and irreducible in $K[x]$. Suppose that the characteristic $p$ of the residue field of $A$ does not divide $\deg f$ and there is a unique extension $v$ of $v_0$ to the quotient field of $A[x]/(f(x))$. 
Then there exists a finite sequence of inductive values $v_1,\ldots,v_n$ with respective associated key polynomials $\phi_1=x, \phi_2,\ldots,\phi_n=f$ such that  $v_k=[v_{k-1}, v_k(\phi_k)=v(\phi_k)]$ for $1\le k\le n$, $v$ is the valuation on $A[x]/(f(x))$ induced by the pseudo valuation $v_n$ and 
  each $\phi_i\in A[x]$.
\end{Theorem}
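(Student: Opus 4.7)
My starting point is Lemma~\ref{Lemma1}, which produces key polynomials $\phi_1=x,\phi_2,\ldots,\phi_n=f$ in $\mathcal O_{v_0}[x]$ satisfying $f\sim_{v_{k-1}}\phi_k^{n_k}$ with $n_k=\deg f/\deg \phi_k$. The endpoints $\phi_1=x$ and $\phi_n=f$ already lie in $A[x]$, so the plan is to upgrade each intermediate $\phi_k$ (for $2\le k\le n-1$) to a polynomial $\tilde\phi_k\in A[x]$ of the same degree which defines the same MacLane stage~$v_k$. The arithmetic input is that $n_k\mid\deg f$ and $p\nmid\deg f$ force $p\nmid n_k$, so $n_k$ is a unit in the local ring $A$ throughout.

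I proceed by induction on $k$, assuming that $\tilde\phi_1,\ldots,\tilde\phi_{k-1}\in A[x]$ have already been built. By Remark~\ref{RemarkM20} the $\tilde\phi_{k-1}$-expansion of $f$ lies entirely in $A[x]$,
\[
f = \tilde\phi_{k-1}^{s}+f_{s-1}\tilde\phi_{k-1}^{s-1}+\cdots+f_0,\qquad s=ln_k,\ l=\deg\phi_k/\deg\phi_{k-1},
\]
with $\deg f_i<\deg\tilde\phi_{k-1}$; I look for $\tilde\phi_k$ in the parametric form
\[
\tilde\phi_k = \tilde\phi_{k-1}^{l}+b_{l-1}\tilde\phi_{k-1}^{l-1}+\cdots+b_0,\qquad b_i\in A[x],\ \deg b_i<\deg\tilde\phi_{k-1},
\]
determining the $b_i$ from the requirement $\tilde\phi_k^{n_k}\sim_{v_{k-1}}f$. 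Expanding $\tilde\phi_k^{n_k}$ by the multinomial theorem and reducing each product of the $b_i$ modulo $\tilde\phi_{k-1}$, the coefficient of $\tilde\phi_{k-1}^{s-j}$ (for $j=1,\ldots,l$) takes the shape $n_k b_{l-j}+P_j(b_{l-1},\ldots,b_{l-j+1})$, where $P_j$ is a polynomial expression carrying integer multinomial weights. Because $n_k$ is invertible in $A$, I solve recursively for $b_{l-1},\ldots,b_0\in A[x]$ so that these coefficients match $f_{s-1},\ldots,f_{s-l}$.

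Granting that this choice actually yields $\tilde\phi_k^{n_k}\sim_{v_{k-1}}f$, combining with $\phi_k^{n_k}\sim_{v_{k-1}}f$ gives that $(\tilde\phi_k-\phi_k)(\tilde\phi_k^{n_k-1}+\tilde\phi_k^{n_k-2}\phi_k+\cdots+\phi_k^{n_k-1})$ has $v_{k-1}$-value strictly larger than $n_kv_{k-1}(\phi_k)$. The second factor has $v_{k-1}$-value $(n_k-1)v_{k-1}(\phi_k)$, since $\tilde\phi_k$ and $\phi_k$ are both monic with leading summand $\tilde\phi_{k-1}^l$ in their $\tilde\phi_{k-1}$-expansion, so their ${\rm gr}_{v_{k-1}}$-initial forms agree modulo an $n_k$-th root of unity that must equal $1$. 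Dividing out that factor yields $v_{k-1}(\tilde\phi_k-\phi_k)>v_{k-1}(\phi_k)$, so $\tilde\phi_k$ and $\phi_k$ share a $v_{k-1}$-initial form, and in MacLane's framework this forces the augmented valuation $[v_{k-1};\tilde v_k(\tilde\phi_k)=v(\tilde\phi_k)]$ to agree with the original $v_k$. At $k=n$ I simply take $\tilde\phi_n=f$.

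The main obstacle is to certify that matching the top $l$ coefficients of the $\tilde\phi_{k-1}$-expansion of $\tilde\phi_k^{n_k}$ with those of $f$ actually forces the equivalence $\tilde\phi_k^{n_k}\sim_{v_{k-1}}f$: every lower coefficient of $\tilde\phi_k^{n_k}-f$ must pick up $v_{k-1}$-value strictly above $n_kv_{k-1}(\phi_k)$. This demands careful tracking, via (\ref{eqM3}) and (\ref{eqM5}), of how the multinomial contributions and the successive reductions modulo $\tilde\phi_{k-1}$ interact with the minimum-value rule defining $v_{k-1}$. The invertibility of $n_k$ in $A$ is the decisive ingredient; without it the recursive solution for the $b_i$ collapses, which is exactly the obstruction illustrated by the example of Section~\ref{SectionEx}.
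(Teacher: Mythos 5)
Your strategy is the same one the paper uses: expand the $n_k$-th power of a candidate key polynomial by the multinomial theorem, observe that the system relating its coefficients to those of $f$ is triangular with diagonal entry $n_k$ (a unit in $A$ since $p\nmid\deg f$), and solve downward. However, the step you yourself flag as ``the main obstacle'' is exactly where all the work in the paper's proof lies, and you do not carry it out, so as written this is an outline rather than a proof. Two concrete points. First, your assertion that the coefficient of $\tilde\phi_{k-1}^{\,s-j}$ in the canonical expansion of $\tilde\phi_k^{n_k}$ is $n_kb_{l-j}+P_j(b_{l-1},\ldots,b_{l-j+1})$ is not literally correct: reducing a product of the $b_i$ modulo $\tilde\phi_{k-1}$ produces a quotient that carries into the coefficient of the \emph{next higher} power of $\tilde\phi_{k-1}$, so the canonical coefficients mix contributions from different multinomial strata. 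The paper deals with this by not passing to the canonical expansion at all: it keeps the raw multinomial coefficients $c_i$ of (\ref{eq3}), writes each monomial as $\alpha+\beta\phi_k$, and uses that no $a_i$ is equivalence-divisible by the equivalence-irreducible $\phi_k$ together with $v_k(\phi_k)>v_{k-1}(\phi_k)$ to conclude $c_i\sim_{v_k}\alpha_i$; the carry terms are killed because they gain value in passing from $v_{k-1}$ to $v_k$. Your sketch needs this (or an equivalent) argument.

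Second, and more seriously, you set up the construction so that you must prove a posteriori that your from-scratch solution satisfies $\tilde\phi_k^{n_k}\sim_{v_{k-1}}f$, i.e.\ that \emph{every} lower coefficient of the difference also has high enough value --- and this is left undone. The paper avoids having to prove any such global statement by approximating the MacLane key polynomial $g=g_{k+1}$, which is already known to exist in $\mathcal O_{v_0}[x]$, rather than solving for a new polynomial. Since $f\sim_{v_k}g^{e}$, one gets the estimate (\ref{eqN4}), $v_k(c_i-b_i)>(re-i)v_k(\phi_k)$, for free; the combinatorial identity $c_{(e-1)r+j}=ea_j+H_j$ (with $H_j$ involving only $a_{j+1},\ldots,a_r$, by (\ref{eqN3})) then yields, by descending induction, elements $u_j\in A[x]$ with $v_k(a_j-u_j)>(r-j)v_k(\phi_k)$, and $\sum u_j\phi_k^j\sim_{v_k}g$ follows immediately; no verification that the new polynomial's $e$-th power is equivalent to $f$ is ever needed. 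I recommend you reorganize your induction along these lines: the triangular system should be used to approximate the known $a_j$ by elements of $A[x]$, not to define unknowns whose adequacy must then be certified. (Your closing argument that $\operatorname{In}_{v_{k-1}}(\tilde\phi_k)=\operatorname{In}_{v_{k-1}}(\phi_k)$, via the $n_k$-th root of unity being forced to $1$ by comparing leading coefficients in the $\tilde\phi_{k-1}$-expansion, is fine once the equivalence of the $n_k$-th powers is in hand, but it becomes unnecessary under the reorganization.)
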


\begin{proof} By Lemma \ref{Lemma1}, there exists a finite sequence of inductive values $v_1,\ldots,v_n$ with respective associated key polynomials $g_1=x, g_2,\ldots,g_n=f$ in $\mathcal O_{v_0}[x]$ such that  $v_k=[v_{k-1}, v_k(g_k)=v(g_k)]$ for $1\le k\le n$, 
$v$ is the valuation on $A[x]/(f(x))$ induced by the pseudo valuation $v_n$. Further, 
 for all $k<n$, $\deg g_k\mid \deg f$  and 
$f\sim_{v_{k-1}}g_k^{n_k}$ where $n_k=\frac{\deg f}{\deg g_k}$. The $g_k$ are homogeneous in $v_{k-1}$ for all $k$. We will inductively construct homogeneous key polynomials $\phi_k$ for $v_{k-1}$ for $1\le k\le n$ such that $\phi_k\sim_{v_{k-1}} g_k$ and $\phi_k\in A[x]$ for all $k$. Set $\phi_1=g_1=x$ and suppose by induction that we have constructed such $\phi_i$ up to $\phi_k$. Set $g=g_{k+1}$, $e=n_{k+1}$.
Expand
$$
g=\sum_{i=0}^ra_i\phi_k^i
$$
with $a_i\in \mathcal O_{v_0}[x]$ and $\deg a_i<\deg \phi_k$  for all $i$ (by Remark \ref{RemarkM17}).  Further, $a_r=1$ and
\begin{equation}\label{eqN9}
D_{\phi_k}(g)=r
\end{equation}
by \cite[Theorem 9.4]{M}. For $0\le i\le re$, set 
\begin{equation}\label{eq3}
c_i=\sum \binom{e}{l_0,l_1,\ldots,l_r}a_0^{l_0}a_1^{l_1}\cdots a_r^{l_r}
\end{equation}
where the  sum  is over $l_0,l_1,\ldots,l_r\in \NN$ such that 
\begin{equation}\label{eqN10}
l_0+l_1+\cdots+l_r=e
\end{equation}
 and 
 \begin{equation}\label{eqN11}
 l_1+2l_2+\cdots+rl_r=i. 
 \end{equation}
Then
$$
g^e=\sum_{l_0+l_1+\cdots+l_r=e} \binom{e}{l_0,l_1,\ldots,l_r}a_0^{l_0}(a_1\phi_k)^{l_1}\cdots(a_r\phi_k^r)^{l_r}\\
=\sum_{i=0}^{re}c_i\phi_k^i.
$$
Since $g$ is homogeneous, 
$v_k(a_i\phi_k^i)=v_k(g)$ for all $0\le i\le r$ such that $a_i\ne 0$. Thus 
$$
v_{k-1}(a_0^{l_0}a_1^{l_1}\cdots a_r^{l_r})=v_k(a_0^{l_0}a_1^{l_1}\cdots a_r^{l_r})=v_k(g^e)-iv_k(\phi_k)
$$
for all terms $a_0^{l_0}a_1^{l_1}\cdots a_r^{l_r}$ appearing in the expansion (\ref{eq3}) of $c_i$ which are nonzero. In particular, 
\begin{equation}\label{eq4}
v_{k-1}(c_i)+v_k(\phi_k^i)\ge\min\{v_{k-1}(a_0^{l_0}a_1^{l_1}\cdots a_r^{l_r})+v_k(\phi_k^i)\}
\ge v_k(g^e)
\end{equation}
for all $i$. Thus
$$
v_k(g^e)\ge \min\{v_k(c_i\phi_k^i)\}\ge  \min\{v_{k}(c_i)+v_k(\phi_k^i)\} \ge \min\{v_{k-1}(c_i)+v_k(\phi_k^i)\}\ge v_k(g^e)
$$
 and so
\begin{equation}\label{eq5}
v_k(g^e)=\min\{v_{k}(c_i)+v_k(\phi_k^i)\}
=\min\{v_{k-1}(c_i)+v_k(\phi_k^i)\}.
\end{equation}

Expand $f=\sum_{i=1}^{re} b_i\phi_k^i$ with $b_i\in A[x]$ and $\deg b_i<\deg \phi_k$  for all $i$,
 and $b_{re}=1$ (by Remark \ref{RemarkM20} and Lemma \ref{Lemma1}). Let 
 $F=\sum b_i\phi_k^i$ where the sum is restricted to the $j$ such that $v_{k-1}(b_j)+jv_k(\phi_k)$ is minimal. To simplify notation, we may assume in the following calculations that $f=F$ is homogeneous, since we do not have to concern ourselves with the non relevant higher value terms of $f$.  We have that
 $$
 D_{\phi_k}(f)=D_{\phi_k}(g^e)=eD_{\phi_k}(g)=er
 $$
 by (\ref{eqN9}). Thus
 $$
 v_k(f)=\min\{v_{k-1}(b_i)+v_k(\phi_k^i)\}=rev_k(\phi_k)
 $$
 Write
 $$
 a_0^{l_0}a_1^{l_1}\cdots a_r^{l_r}=\alpha_{l_0,\ldots,l_r}+\beta_{l_0,\ldots,l_r}\phi_k
 $$
 with $\beta_{l_0,\ldots,l_r}\in \mathcal O_{v_0}[x]$ and $\deg \alpha_{l_0,\ldots,l_r}<\deg \phi_k$ for all $l_0,\ldots,l_r$. 
 
 Since $\phi_k$ is a key polynomial over $v_{k-1}$ it is equivalence irreducible for $v_{k-1}$.
 No $a_i$ is equivalence divisible by $\phi_k$ so
 $a_0^{l_0}a_1^{l_1}\cdots a_r^{l_r}$ is not equivalence divisible by $\phi_k$. Thus 
 $$
 v_{k-1}(a_0^{l_0}a_1^{l_1}\cdots a_r^{l_r})=v_{k-1}(\alpha_{l_0,\ldots,l_r})
 $$ 
 by Lemma 4.3 \cite{M} and Lemma 1.1 \cite{V}. Since 
 $$
 v_{k-1}(\beta_{l_0,\ldots,l_r}\phi_k)\ge v_{k-1}(a_0^{l_0}a_1^{l_1}\cdots a_r^{l_r})
 $$
 by Lemma 4.3 \cite{M} and Lemma 1.1 \cite{V}, and $v_k(\phi_k)>v_{k-1}(\phi_k)$ we have that
 $$
 a_0^{l_0}a_1^{l_1}\cdots a_r^{l_r}\sim_{v_k}\alpha_{l_0,\ldots,l_r}.
 $$ 
 
 We thus  have that for $0\le i\le re$, 
 $$
 \begin{array}{lll}
 c_i&=&\sum \binom{e}{l_0,l_1,\ldots,l_r}a_0^{l_0}a_1^{l_1}\cdots a_r^{l_r}=\sum \binom{e}{l_0,l_1,\ldots,l_r}\alpha_{l_0,\ldots,l_r} +\sum   \binom{e}{l_0,l_1,\ldots,l_r}    \beta_{l_0,\ldots,l_r}\phi_k\\
 &\sim_{v_k} &\sum \binom{e}{l_0,l_1,\ldots,l_r}\alpha_{l_0,\ldots,l_r}
 \end{array}
  $$

 Let $\alpha_i=\sum \binom{e}{l_0,l_1,\ldots,l_r}\alpha_{l_0,\ldots,l_r}$ so that $c_i\sim_{v_k}\alpha_i$.

 Now 
 $$
 \sum\alpha_i\phi_k^i\sim_{v_k} \sum c_i\phi_k^i\sim_{v_k}f=\sum b_i\phi_k^i.
 $$
 Thus $v_k(\sum(\alpha_i-b_i)\phi_k^i)>v_k(f)$.
 Since $\deg \alpha_i<\deg \phi_k$ and $\deg b_i<\deg \phi_k$, we have
 that 
 $$
 v_k(\sum(\alpha_i-b_i)\phi_k^i)=\min\{v_{k-1}(\alpha_i-b_i)+iv_k(\phi_k)\}.
 $$
    Thus for all $i$,
 $$
 v_k(\alpha_i-b_i)=v_{k-1}(\alpha_i-b_i)>v_k(f)-iv_k(\phi_k).
 $$
  In summary, since $v_k(f)=re v_k(\phi_k)$ and $c_i\sim_{v_k}\alpha_i$,   we have that 
 \begin{equation}\label{eqN4}
v_k(c_i-b_i)>(re-i)v_k(\phi_k).
\end{equation}
for all $i$.

By the homogeneity assumptions, for all $l$, we have that either $a_l$ is zero or 
\begin{equation}\label{eq1}
v_k(a_l)=(r-l)v_k(\phi_k)
\end{equation}
and 
for all $l$, we have that either $b_l$ is zero or 
\begin{equation}\label{eq2}
v_k(b_l)=(re-l)v_k(\phi_k).
\end{equation}

Suppose that $0\le j<r$. Then for $i=(e-1)r+j$, if $a_0^{l_0}\cdots a_r^{l_r}$ is a nonzero monomial appearing in $c_i$, 
 then equations (\ref{eqN10}) and (\ref{eqN11}) imply 
$$
r-j=re-i=l_0r+l_1(r-1)+\cdots+l_{r-1}
$$
so that
\begin{equation}\label{eqN3}
l_m=0\mbox{ if }m<j
\end{equation}
 and the only monomial for which $l_j\ne 0$ is $a_ja_r^{e-1}=a_j$. For $0\le j\le r-1$,
$$
c_{(e-1)r+j}=ea_{j}+H_j
$$
where 
$a_j=0$ or $v_k(a_j)=(r-j)v_k(\phi_k)$
 and 
$H_j$ is a sum of integers times monomials $M$ in $a_{j+1},\ldots,a_{r}$ of degree $e$ such that 
$$
v_k(M)=(r-j)v_k(\phi_k).
$$
We have that $p\not| e$ since $p\not|\deg f$. 

We necessarily have that $H_{r-1}=0$ by (\ref{eqN3}) since $a_r=1$  and $v_{k-1}(a_{r-1})=v_k(a_{r-1})>0$. Thus by (\ref{eqN4}),
$$
v_k(a_{r-1}-\frac{1}{e}b_{er-1})>v_k(\phi_k).
$$
We have that $\frac{1}{e}\in A$ since $p\not|e$ and $A$ is a local ring. 
By descending induction on $j$, we have that for $0\le j\le r-1$, there exists $u_j\in A[x]$ such that 
$$
v_k(a_j-u_j)>(r-j)v_k(\phi_k)
$$
since $b_i\in A[x]$ for all $i$.
Set $u_r=a_r=1$. For $0\le i\le r$, write $u_i=a_i+z_i$ with $z_i\in \mathcal O_{v_0}[x]$. We have that $v_k(z_i)>(r-i)v_k(\phi_k)$. Thus
 $$
 v_k(\sum_{i=0}^ru_i\phi_k^i-g)=v_k(\sum_{i=0}^rz_i\phi_k^i)
 \ge \min\{v_k(z_i\phi_k^i)\}>rv_k(\phi_k)=v_k(g).
 $$
 Thus $g\sim_{v_k}\sum_{i=0}^ru_i\phi_k^i$ with $\sum_{i=0}^ru_i\phi_k^i\in A[x]$.
 Set $\phi_{k+1}$ to be the homogenization  of $\sum_{i=0}^ru_i\phi_k^i$, which  is a key polynomial for $v_{k}$ which is equivalent to $g$ and is in $A[x]$ by Remark \ref{RemarkM20}.

\end{proof}

 We have constructed the key polynomials $\phi_i$, for $1\le i\le n$ in Theorem \ref{Theorem2} so that $\phi_i\in A[x]$ for $i\ge 1$ and so that they are homogeneous. Thus they have an expansion
 $$
 \phi_{i+1}=\sum_{k=0}^{m_i}h(i)_k\phi_i^k
 $$
 with each $h(i)_k\in A[x]$ satisfying $\deg h(i)_k<\deg \phi_i$ by Remark \ref{RemarkM20}. Further, $h(i)_{m_i}=1$, $v(h(i)_k)=(m_i-k)v(\phi_i)$ for all $i$ such that $h(i)_k\ne 0$ and $h(i)_0\ne 0$ by Theorem 9.4 \cite{M}. 
 
 Since $\phi_{i+1}$ is homogeneous, we have expansions 
 $$
 h(i)_k=\sum c_{i,j_1,\ldots,j_{i-1},k}\phi_1^{j_1}\cdots \phi_{i-1}^{j_{i-1}}
 $$
 where the sum is over $0\le j_i<n_i$ and $c_{i,j_1,\ldots,j_{i-1},k}\in A$ for all $i,j_1,\ldots,j_{i-1},k$ and $c_{j_1,\ldots,j_{i-1},k}=0$ if
 $v(c_{i,j_1,\ldots,j_{i-1},k}\phi_1^{j_1}\cdots \phi_{i-1}^{j_{i-1}})\ne v(\phi_i^{m_i-k})$. 
 
 The ring ${\rm gr}_v(A[x]/(f(x)))$ is a graded ${\rm gr}_{v_0}(A)$-algebra which is generated as a ${\rm gr}_{v_0}(A)$-algebra 
 by the initial forms $\overline\phi_i={\rm In}_v(\phi_i)$ of $\phi_i$ in  the quotient of valuation ideals 
 $$
 \mathcal P_{v(\phi_i)}(A[x]/(f(x)))/ \mathcal P_{v(\phi_i)}^+(A[x]/(f(x)),
 $$
  the graded component of degree $v(\phi_i)$ in ${\rm gr}_{v}(A[x]/(f(x)))$.  As in Theorem 1.7 \cite{V}, We have by induction on $n$, a graded isomorphism
 $$
 {\rm gr}_{v}(A[x]/(f(x)))\cong {\rm gr}_{v_0}(A)[\overline\phi_1,\ldots,\overline \phi_{n-1}]/I
 $$
 where 
 $$
 I=(\overline\phi_1^{m_1}+\sum \overline c_{1,k}\overline \phi_1^k,\ldots,
 \overline \phi_{n-1}^{m_{n-1}}+\sum \overline c_{n-1,j_1,\ldots, j_{n-2},k}\overline \phi_1^{j_1}\cdots \overline \phi_{n-2}^{j_{n-2}}\overline\phi_{n-1}^k),
 $$
  with $\overline c_{i,j_1,\ldots,j_{i-1},k}$ the initial form of  $c_{i,j_1,\ldots,j_{i-1},k}$ in ${\rm gr}_{v_0}(A)$.
  
  In Theorem 5.1 \cite{CMT}, it is further assumed that $A/m_A=Kv_0$ is algebraically closed. It is shown there, that with these extra assumptions, 
  $$
  {\rm gr}_v(A[x]/(f(x)))\cong {\rm gr}_{v_0}(A)[\overline \phi_1,\ldots,\overline\phi_{n-1}]/I
  $$
  where $I$ is generated by binomials,  
  $$
  I=(\overline \phi_1^{m_1}+\overline c_1, \overline \phi_2^{m_2}+\overline c_2\phi_1^{j_1(2)},\ldots,
  \overline \phi_{n-1}^{m_{n-1}}+\overline c_{n-1}\overline\phi_1^{j_1(n-1)}\cdots \overline\phi_{n-2}^{j_{n-2}(n-1)}).
  $$
  
  Suppose that $S\subset T$ are semigroups. $T$ is a finitely generated $S$-module if there exist finitely many elements $z_1,\ldots,z_l\in T$ such that $T=\cup_{i=1}^l(S+z_i)$.
 
 \begin{Corollary}\label{CortoThm2}  Let assumptions be as in Theorem \ref{Theorem2}. Then
 ${\rm gr}_{v}(A[x]/(f(x)))$ is a finitely presented ${\rm gr}_{v_0}(A)$-algebra.  
 
  Let 
 $$
 S^A(v_0)=\{y\in A\mid v_0(y)\ge 0\}\mbox{ and }S^{A[x]/(f(x))}(v)=\{z\in A[x]/(f(x))\mid v(z)\ge 0\}.
 $$
 Then the semigroup $S^{A[x]/(f(x))}(v)$ is a finitely generated $S^A(v_0)$-module.
 \end{Corollary}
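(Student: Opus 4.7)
To prove finite presentation, I would simply quote the explicit graded isomorphism
$$
{\rm gr}_{v}(A[x]/(f(x)))\cong {\rm gr}_{v_0}(A)[\overline\phi_1,\ldots,\overline \phi_{n-1}]/I
$$
that was established in the discussion immediately preceding the corollary. Since $I$ is generated by the $n-1$ explicit elements listed there, this exhibits ${\rm gr}_v(A[x]/(f(x)))$ as the quotient of a polynomial ring over ${\rm gr}_{v_0}(A)$ in $n-1$ variables by a finitely generated ideal, which is the definition of finite presentation; nothing more is needed for this half of the corollary.

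For the semigroup assertion, the plan is to show that the finite family of values
$$
\Gamma=\bigl\{v(\phi_1^{j_1}\cdots\phi_{n-1}^{j_{n-1}})\mid 0\le j_i<m_i\text{ for }1\le i\le n-1\bigr\}
$$
generates $S^{A[x]/(f(x))}(v)$ as an $S^A(v_0)$-module, where the $m_i$ are the $\overline\phi_i$-degrees appearing in the relations defining $I$. Given $\gamma\in S^{A[x]/(f(x))}(v)$, I would choose $z\in A[x]/(f(x))\setminus\{0\}$ with $v(z)=\gamma$, take its initial form $\overline z\in {\rm gr}_v(A[x]/(f(x)))$, which is nonzero and homogeneous of degree $\gamma$, and reduce a representing polynomial in $\overline\phi_1,\ldots,\overline\phi_{n-1}$ modulo $I$: starting from the largest index $i$ for which some term has $\overline\phi_i$-exponent $\ge m_i$, the corresponding relation rewrites $\overline\phi_i^{m_i}$ using only monomials in $\overline\phi_1,\ldots,\overline\phi_i$ with strictly smaller $\overline\phi_i$-degree. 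Iterating would yield a homogeneous expression
$$
\overline z=\sum_J\overline a_J\,\overline\phi_1^{j_1}\cdots\overline\phi_{n-1}^{j_{n-1}}
$$
indexed by multi-indices $J$ with $0\le j_i<m_i$, each $\overline a_J$ homogeneous in ${\rm gr}_{v_0}(A)$.

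Homogeneity forces each nonzero summand to have total degree $\gamma$, so picking any such term I would lift $\overline a_J$ to $a_J\in A$ with $v_0(a_J)=\deg\overline a_J$ and obtain $\gamma=v_0(a_J)+v(\phi_1^{j_1}\cdots\phi_{n-1}^{j_{n-1}})$, exhibiting $\gamma$ as an element of $S^A(v_0)+v(\phi_1^{j_1}\cdots\phi_{n-1}^{j_{n-1}})\subseteq S^A(v_0)+\Gamma$. The only step worth flagging is termination of the reduction; this is transparent since rewriting $\overline\phi_i^{m_i}$ produces no $\overline\phi_l$ with $l>i$, so the largest index exceeding its bound strictly decreases at each step and the process halts. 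I do not expect a genuine obstacle, as all the substantive structural work has gone into the construction of the presentation in Theorem \ref{Theorem2} and the discussion preceding the corollary.
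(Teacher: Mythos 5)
Your proof is correct and follows essentially the same route as the paper, whose own proof is a one-line appeal to the presentation established just before the corollary; you simply spell out the reduction modulo $I$ that the paper leaves implicit. Your identification of the generating set as the values of the monomials $\phi_1^{j_1}\cdots\phi_{n-1}^{j_{n-1}}$ with bounded exponents is in fact the precise form of the paper's terser claim that the values $v(\phi_1),\ldots,v(\phi_{n-1})$ generate $S^{A[x]/(f(x))}(v)$ as an $S^A(v_0)$-module.
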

 
 \begin{proof} By Theorem \ref{Theorem2} and the above calculations, ${\rm gr}_{v}(A[x]/(f(x)))$ is a finitely presented ${\rm gr}_{v_0}(A)$-algebra and  $S^{A[x]/(f(x))}(v)$ is generated by $v(\phi_1)=v(x),\ldots,v(\phi_{n-1})$ as an $S^A(v_0)$-module. 
 \end{proof}

 \vskip 2truein

\end{document}